\newtheorem{thm}{Theorem}[section]
\theoremstyle{definition}
\newtheorem{rem}{Remark}[section]
\begin{document}

\title{Efficient Time Discretization for Exploring Spatial Superconvergence of Discontinuous Galerkin Methods}%
\author{Xiaozhou Li\footnotemark[1]~\footnotemark[2]}

\renewcommand{\thefootnote}{\fnsymbol{footnote}}
\footnotetext[1]{School of Mathematical Sciences, University of Electronic Science and Technology of China, Chengdu, China. (xiaozhouli@uestc.edu.cn)}
\footnotetext[2]{Supported by NSFSC Grant 2022NSFSC1849 and NSFC Grant 11801062}
\footnotetext[3]{Corresponding author.}
\date{}

\maketitle

\begin{abstract}
We investigate two efficient time discretizations for the post-processing technique of discontinuous Galerkin (DG) methods to solve hyperbolic conservation laws. The post-processing technique, which is applied at the final time of the DG method, can enhance the accuracy of the original DG solution (spatial superconvergence). One main difficulty of the post-processing technique is that the spatial superconvergence after post-processing needs to be matched with proper temporary accuracy. If the semi-discretized system (ODE system after spatial discretization) is under-resolved in time, then the space superconvergence will be concealed.  In this paper, we focus our investigation on the recently designed SDG method and derive its explicit scheme from a correction process based on the DG weak formulation. We also introduce another similar technique, namely the spectral deferred correction (SDC) method. A comparison is made among both proposed time discretization techniques with the standard third-order Runge-Kutta method through several numerical examples, to conclude that both the SDG and SDC methods are efficient time discretization techniques for exploiting the spatial superconvergence of the DG methods.           

\smallskip
\noindent \textbf{Keywords.} discontinuous Galerkin method, superconvergence, post-processing, spectral deferred correction method, SDG method
\end{abstract}

\section{Introduction}
In this paper, we investigate efficient time discretizations for exploring the spatial superconvergence of DG methods for solving hyperbolic problems. 

In the past decades, the DG method has become more and more popular for solving partial differential equations, such as hyperbolic equations, convection-diffusion equations, etc. The first introduction to the DG method was given by Reed and Hill~\cite{Reed:1973} in 1973. Later, this method was extended to the so-called RKDG method, which couples the spatial DG discretization with explicit total variation diminishing (TVD) Runge-Kutta methods, by Cockburn, et al.~\cite{Cockburn:1991,Cockburn:1989a,Cockburn:1989b,Cockburn:1990}. The stability and error estimates for the fully discrete RKDG method can be found in~\cite{Zhang:2004,Zhang:2010}.   

With the development of the DG method, the study of its superconvergence has become an area of increasing interest, since the higher-order information can be extracted from DG solutions by means of the post-processing technique, see~\cite{Cockburn:2003}. This post-processing technique was originally introduced by Bramble and Schatz~\cite{Bramble:1977} in 1977 for enhancing the accuracy of solutions of finite element methods. The first extension of this post-processing technique to DG methods was given by Cockburn et al.~\cite{Cockburn:2003}.  In an ideal situation, the post-processing technique can increase the accuracy order of DG solutions (with polynomials of degree $p$) from $p+1$ to $2p+1$ in the $L^2$ norm. The superconvergence of order $2p+1$ is attractive, which provides an efficient way to obtain high-order accuracy in space since the DG method (without post-processing) requires using polynomials of degree $2p$ to get the same accuracy order.  In recent years, to apply the post-processing technique in practice, researchers have investigated various aspects related to this technique, such as boundary conditions~\cite{Ryan:2003,Ryan:2015}, derivatives~\cite{Ryan:2009,Li:2016}, nonlinear equations~\cite{Ji:2013,Meng:2016}, etc.  
 
However, until now, no work has considered the time discretization for the post-processing technique. Most literature, such as~\cite{Cockburn:2003,Ryan:2003,Ryan:2015,Li:2016}, had simply used an explicit third-order Runge-Kutta method, but the thing is not as easy as it seems to be.  As mentioned before, after the post-processing, the DG approximation will have superconvergence order of $2p+1$ in space. It is an attractive property, but it is also the main difficulty since the temporary accuracy should more or less match the spatial accuracy. A first numerical result has suggested that if the time discretization is under-resolved, then the spatial superconvergence will be concealed, see~\cite{Li:2017}. In this paper, we first demonstrate that for the high-order case ($p \geq 2$), to benefit from the spatial superconvergence, explicit Runge-Kutta methods will suffer from an extremely small time step. This extremely small time step causes the entire algorithm to become inefficient, contrary to the intent of the development of the post-processing technique. To address this issue, we have to look for efficient time discretizations.  In this paper, we define the ``efficient time discretization'' in the sense that it allows using the time step size only restricted by the stability (linear or nonlinear) requirement ($\Delta t = \mathcal{O}(\Delta x)$).  

This paper focuses on the SDG method recently constructed by Li, Benedusi, and Krause~\cite{Li:2017}. The SDG method is an iterative method for time discretization derived from the DG method, with the order of accuracy increased by one for each additional iteration. The general advantage of the SDG method is that it is a one-step method and can be constructed systematically for arbitrary order of accuracy. In particular, the SDG method provides a temporary superconvergence in time, which appropriately matches the spatial superconvergence after post-processing. Also, we give a detailed derivation of the explicit SDG scheme from a correction method based on the DG weak formulation. Another time integrator, the spectral deferred correction method~\cite{Dutt:2000} is also briefly explored in this paper. Numerical experiments are performed to compare these two techniques together with the standard Runge-Kutta methods. The results conclude that both the SDG and SDC methods are efficient when coupled with the DG spatial discretization and the post-processing technique, and the enhancement of the efficiency is significant compared to Runge-Kutta methods. Finally, we remark that both SDG and SDC methods have other attractive properties, such as the flexibility for $hp$ adaptive and the suitable structure for parallel-in-time algorithms, which can be used to enhance the efficiency further. 
 
The paper is organized as follows. In Section 2, we review the DG method coupled with the standard Runge-Kutta method. In Section 3, we describe the post-processing of the DG method and the challenges for efficient time discretizations. In Section 4, we give a description of the SDG method and a brief introduction of the spectral deferred correction (SDC) method. Numerical examples are presented in Section 5. Finally, we give the conclusion in Section 6.      

\section{The Discontinuous Galerkin Method}
In this section, we give a very brief description of the essential concepts of the discontinuous Galerkin method.  We refer to~\cite{Cockburn:1991,Cockburn:1989a,Cockburn:1989b,Cockburn:1990} for details.  

Consider the following one-dimensional hyperbolic equation
\begin{equation}
  \label{eq:hyperbolic}
  u_t + {f(u)}_x = 0, \qquad (x,t) \in \Omega\times[0,T],
\end{equation}
where the spatial domain $\Omega = [a, b] \subset \mathbb{R}$ and $f \in \mathcal{C}^\infty(\mathbb{R})$.  First, we give a partition of $\Omega$ that $a = x_{\frac{1}{2}} < x_{\frac{3}{2}} < \cdots < x_{N+\frac{1}{2}} = b$, and denote elements $I_j = [x_{j-\frac{1}{2}}, x_{j+\frac{1}{2}}]$ with size $\Delta x_j = x_{j+\frac{1}{2}} -  x_{j-\frac{1}{2}}$ for $j = 1,\dots,N$.  For convenience, in this paper, we limit the discussion to the uniform partition (it does not need to be) that $\Delta x_j = \Delta x = \frac{1}{N}(b-a)$ for $j = 1,\dots,N$.   Associated with the partition, we define the DG approximation space
\begin{equation}
  \label{eq:space-s}
  V_h  = \left\{v_h:\, v_h\big\vert_{I_j} \in \mathbb{P}^p(I_j),\,j = 1, \dots, N\right\},
\end{equation}
where $\mathbb{P}^p(I_j)$ denotes the set of polynomials of degree up to $p$ defined on the element $I_j$.  Clearly, $V_h$ is a piecewise polynomial space, and functions in $V_h$ may have discontinuities across element interfaces.  Therefore, we introduce the notation $u_h(x_{j-\frac{1}{2}}^{-})$ and $u_h(x_{j-\frac{1}{2}}^{+})$ for $u_h \in V_h$ i.e.\ the value of $u_h$ at $x_{j-\frac{1}{2}}$ from the left side and from the right side, respectively.   

Then the DG approximation, $u_h$, of the hyperbolic equation~\eqref{eq:hyperbolic} satisfies the semi-discretized weak formulation  
\begin{equation}
  \label{eq:weak-s}
  \int_{I_j} {(u_h)}_t v_h \,dx - \int_{I_j} f(u_h){(v_h)}_x \,dx + \hat{f}v_h\vert_{x_{j+\frac{1}{2}}^{-}} - \hat{f}v_h\vert_{x_{j-\frac{1}{2}}^{+}} = 0, \quad \forall v_h \in V_h,
\end{equation}
for all $1 \leq j \leq N$.  We note that the ``hat'' terms $\hat{f} = \hat{f}(u_h^{-},u_h^{+})$ are referred to as the numerical flux, which is essential to ensure the stability of the numerical scheme.  In this paper, we simply choose the well known Lax-Friedrich flux 
\[
  \hat{f} = \hat{f}(u_h^{-},u_h^{+}) = \frac{1}{2}\left(f(u_h^{-}) + f(u_h^{+}) - \alpha(u_h^{+} - u_h^{-})\right), \qquad \alpha = \max\limits_{u_h}|f'(u_h)|.
\]

To solve equation~\eqref{eq:hyperbolic}, we still need a time discretization for solving the semi-discretized system (ODEs)
\begin{equation}
  \label{eq:system}
  \frac{d }{dt}u_h = \mathcal{L}_h(t, u_h),
\end{equation}
arising from the spatial discretization.

One of the most popular time discretizations for the DG method is the total variation diminishing (TVD) third-order Runge-Kutta method, which is also referred to as one of the strong stability preserving (SSP) methods, see~\cite{Gottlieb:2009}. The TVD third-order Runge-Kutta method we consider is the one introduced in~\cite{Shu:1988},  
\begin{equation}
  \label{eq:RK3}
  \begin{split}
    u_h^{(1)} & = u^n + \Delta t \mathcal{L}_h(t_n, u_h^n), \\
    u_h^{(2)} & = \frac{3}{4}u_h^n + \frac{1}{4}\left(u_h^{(1)} + \Delta t \mathcal{L}_h(t_n + \Delta t, u_h^{(1)})\right), \\
    u_h^{n+1} & = \frac{1}{3}u_h^n + \frac{2}{3}\left(u_h^{(2)} + \Delta t \mathcal{L}_h(t_n + \frac{1}{2}\Delta t, u_h^{(2)})\right), 
  \end{split}
\end{equation}
for computing the approximation, $u_h^{n+1}$ at time $t_{n+1}$ ($t_n +\Delta t$). This TVD third Runge-Kutta method has excellent stability property, i.e., for hyperbolic equations~\cite{Shu:1988,Gottlieb:1998}.  It is moreover well known to show excellent performance when combined with the DG method, which has been verified numerically by different authors~\cite{Shu:1988, Cockburn:2001, Gottlieb:1998}.  We point out that this so-called Runge-Kutta/Discontinuous Galerkin Method (RKDG) has already become a classical and popular numerical scheme for solving conservation laws.    

\section{Post-processing of DG Method}
With the development of the DG method, the study of its superconvergence has become an area of increasing interest, since the higher-order information can be extracted easily from DG solutions by means of post-processing techniques.  In this paper, we only consider the classical post-processing technique that stems from the works of Bramble and Schatz~\cite{Bramble:1977}.  An extension to this technique to discontinuous Galerkin methods was first introduced by Cockburn et al.~\cite{Cockburn:2003} in 2003. Since then, various aspects of post-processing techniques have been studied in the literature, see~\cite{Ryan:2009,Ji:2013,Ryan:2015,Li:2015,Li:2016}. 

The post-processing is applied only at the final time $T$ of the DG approximation.  The post-processed solution, $u_h^{\star}$, is given by 
\begin{equation}
  \label{eq:ustar}
    u_h^{\star}(x, T) = \left(K_h^{(2p+1,\,p+1)}\ast u_h\right)(x, T)= \int_{-\infty}^{\infty}K_h^{(2p+1,\,p+1)}(x - \xi)u_h(\xi, T)\, d\xi,
\end{equation}
where the kernel, $K^{(2p+1,\,p+1)}$, is a linear combination of central B-splines,
\begin{equation}
    \label{eq:kerner}
    K^{(2p+1,\,p+1)}(x) = \sum\limits_{\gamma=0}^{2p}c^{(2p+1,\,p+1)}_{\gamma}\psi^{(p+1)}\left(x - (-p+\gamma)\right),
\end{equation}
and the scaled kernel 
\[
  K_h^{(2p+1,\,p+1)}(x) = \frac{1}{h}K^{(2p+1,\,p+1)}\left(\frac{x}{h}\right)
\] 
with the scaling $h = \Delta x$.  The $p+1$ order central B-spline,$\,\psi^{(p+1)}(x)$, can be constructed recursively for $\ell \geq 1$ 
\begin{equation}
    \begin{split}
      \psi^{(1)} & = \chi_{[-\frac{1}{2},\frac{1}{2}]}(x), \\
        \psi^{(\ell+1)}(x) & =  \frac{1}{\ell}\left(\left(\frac{\ell+1}{2}+x\right)\psi^{(\ell)}\left(x+\frac{1}{2}\right) + \left(\frac{\ell+1}{2}-x\right)\psi^{(\ell)}\left(x-\frac{1}{2}\right)\right).
    \end{split}
\end{equation}
The coefficients $c^{(2p+1,p+1)}_{\gamma}$ are computed by requiring the property that the kernel reproduces polynomials by convolution up to degree $2p$, 
\[
  K^{(2p+1,p+1)} \ast r = r, \quad r = 1, x,\dots, x^{2p}.
\]

This post-processing technique is designed to enhance the accuracy order of DG solutions in the $L^2$ norm.  The error analysis has been carried out for linear hyperbolic equations with periodic boundary conditions and can be found in~\cite{Cockburn:2003}.   
\begin{thm}[Cockburn et al.~\cite{Cockburn:2003}]\label{thm:pp}
Let $u$ be the exact solution for the linear equation~\eqref{eq:hyperbolic}, $f(u) = a u$ ($a$ is a constant), with periodic boundary conditions, and $u_h$ the DG approximation derived by scheme~\eqref{eq:weak-s} over a uniform grid.  Then, the post-processed solution satisfies 
\[ 
    \|u - K_h^{(2p+1,p+1)}\ast u_h\|_{L^2} \leq C h^{2p+1}.
\]
\end{thm}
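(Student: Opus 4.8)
The result is classical: it follows the program of Bramble and Schatz~\cite{Bramble:1977} adapted to DG by Cockburn et al.~\cite{Cockburn:2003}, and I sketch the route I would take. The starting point is the split
\[
  \|u - K_h^{(2p+1,p+1)}\ast u_h\|_{L^2} \le \|u - K_h^{(2p+1,p+1)}\ast u\|_{L^2} + \|K_h^{(2p+1,p+1)}\ast (u - u_h)\|_{L^2},
\]
which separates a purely approximation-theoretic consistency term from a term that must absorb the DG error. The first term involves only the exact solution and the kernel: since the coefficients $c_\gamma^{(2p+1,p+1)}$ are chosen precisely so that $K^{(2p+1,p+1)}$ reproduces all polynomials of degree up to $2p$ by convolution, a local Taylor expansion of $u$ to degree $2p$ (the kernel reproduces the Taylor polynomial exactly, so only the $\mathcal{O}(h^{2p+1})$ remainder survives, using compact support and the B-spline normalization) yields $\|u - K_h^{(2p+1,p+1)}\ast u\|_{L^2}\le C h^{2p+1}\|u\|_{H^{2p+1}}$. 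This step is independent of the numerical method.

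The substance is the second term, and here I would invoke the two pillars of the Bramble--Schatz machinery. The first pillar is the negative-order norm superconvergence of the DG scheme: for the constant-coefficient problem $f(u)=au$ with periodic boundary conditions one has, uniformly for divided differences $\partial_h^{\alpha}$ of order $0\le\alpha\le p+1$,
\[
  \|\partial_h^{\alpha}(u - u_h)\|_{H^{-(p+1)}} \le C h^{2p+1}.
\]
I would prove this by duality against the backward adjoint problem, using the Galerkin orthogonality of the DG weak form~\eqref{eq:weak-s} together with the standard degree-$(p+1)$ approximation estimates for $V_h$. The crucial structural fact, and the reason the hypotheses $f(u)=au$ and periodicity cannot be dropped, is that the problem is translation invariant, so each divided difference $\partial_h^{\alpha}u_h$ is itself the DG approximation of $\partial_h^{\alpha}u$, and the same consistency argument applies to all of them simultaneously.

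The second pillar is the transfer lemma that converts these negative-norm bounds into an $L^2$ bound with no loss of order. Writing the duality pairing $\|K_h^{(2p+1,p+1)}\ast(u-u_h)\|_{L^2} = \sup_{\|\phi\|_{L^2}=1}\langle u - u_h,\, \bar K_h\ast\phi\rangle$ with $\bar K_h$ the reflected scaled kernel, I would exploit the fact that the $(p+1)$-th distributional derivative of the central B-spline $\psi^{(p+1)}$ is a $(p+1)$-th order difference of Dirac masses. Distributing these differences across the pairing turns $\bar K_h\ast\phi$ into a sum of scaled divided differences and reproduces exactly the quantities $\partial_h^{\alpha}(u-u_h)$ controlled above; the factor $h^{-(p+1)}$ generated by differentiating the scaled kernel is compensated precisely by the $h^{p+1}$ relating the undivided difference to the divided difference, so that
\[
  \|K_h^{(2p+1,p+1)}\ast(u-u_h)\|_{L^2}\le C\sum_{\alpha=0}^{p+1}\|\partial_h^{\alpha}(u-u_h)\|_{H^{-(p+1)}}\le C h^{2p+1}.
\]
Combining the two terms gives the claim.

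The main obstacle is the negative-order norm estimate of the first pillar, and in particular making it uniform over all divided differences up to order $p+1$: each application of $\partial_h^{\alpha}$ must not degrade the order, which is exactly where translation invariance of the linear constant-coefficient flux is used and where any nonlinear or variable-coefficient generalization would break down. The transfer lemma, by contrast, is a fixed computation with the explicit B-spline kernel, delicate in the bookkeeping of the difference coefficients but routine once the derivative-as-difference identity for $\psi^{(p+1)}$ is in hand.
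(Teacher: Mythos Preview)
The paper does not give its own proof of this theorem; it is stated with attribution to Cockburn et al.~\cite{Cockburn:2003} and no argument is supplied beyond that citation. Your sketch is a correct and faithful outline of precisely that classical proof---the Bramble--Schatz splitting, the kernel consistency bound from polynomial reproduction, the negative-order superconvergence $\|\partial_h^{\alpha}(u-u_h)\|_{H^{-(p+1)}}\le Ch^{2p+1}$ via duality and translation invariance of the constant-coefficient scheme, and the transfer to $L^2$ through the difference structure of the B-spline kernel---so there is nothing further to compare.
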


\subsection{Runge-Kutta Methods}
\label{section-rk}
As shown in Theorem~\ref{thm:pp}, the post-processing technique can significantly enhance the accuracy order of DG solutions from $p+1$ up to $2p+1$.  In other words, it provides an efficient way to obtain high-order approximations.  However, to the knowledge of the authors, the time discretization for the semi-discretized system~\eqref{eq:system} has never been investigated for the post-processing technique.  More precisely, most literature had simply applied the third Runge-Kutta method~\eqref{eq:RK3} with a note that said a ``sufficient'' small time step is used such that the error is dominated by spatial errors.  It works fine to verify the convergence rate of the post-processing technique proved in Theorem~\ref{thm:pp}.  However, for practical usage, this ``sufficient'' small time step sometimes leads to significant inefficiency.   

To demonstrate the problem, consider the linear hyperbolic equation
\begin{equation}
  \label{eq:ex1}
  \begin{split}
    u_t + u_x & = 0, \quad (x,t) \in [0,1]\times [0, T]\\
       u(x,0) & =  \sin(2\pi x)
  \end{split}
\end{equation}
with final time $T = 1$.  For uniform grids, we give the $L^2$ norm error and respecting convergence order are in Table~\ref{table:ex1}.  The results show that the post-processed solutions have the superconvergence order $2p+1$.  For the time step size, in this example, we use $\Delta t= \text{cfl} \Delta x$ with $\text{cfl} = 0.1$ for $p = 1$, and $\text{cfl} = 0.01$ for $p = 2$.  We note that the CFL number is reasonable for $\mathbb{P}^1$ case, but it is quite small for $\mathbb{P}^2$ compared to standard usage.  To further demonstrate this issue, we use a high-order approximation with polynomials of degree $4$ and $160$ elements for the DG discretization, and compare the results when different CFL numbers are used, see Figure~\ref{fig:ex1cfl}.  Here, for the post-processed solution,  we can see that a sufficiently small CFL number (less than $10^{-4}$) is way too small.  In this example, using the post-processing technique requires a time step size, which is $100$ times smaller than when using the standard DG method (without post-processing).  As a consequence, the efficiency provided by the post-processing technique is ruined by the extremely small time step size!  

\begin{table}\centering 
\begin{tabular}{@{}cccccccc@{}}\toprule
  & & & \multicolumn{2}{c}{DG error} & \phantom{abc}& \multicolumn{2}{c}{After post-processing} \\ 
\cmidrule{4-5} \cmidrule{7-8}
Degree & $N$ & & $L^2$ error & Order && $L^2$ error & Order \\ 
\midrule
$p = 1$ & 20  && 4.60E-03 &   --  && 1.97E-03 &  -- \\ 
        & 40  && 1.09E-03 &  2.08 && 2.44E-04 & 3.02\\ 
        & 80  && 2.67E-04 &  2.02 && 3.02E-05 & 3.01\\ 
        & 160 && 6.65E-05 &  2.01 && 3.76E-06 & 3.01\\ 
\midrule
$p = 2$ & 20  && 1.07E-04 &   --  && 4.11E-06 &   -- \\
        & 40  && 1.34E-05 &  3.00 && 9.49E-08 &  5.44\\
        & 80  && 1.67E-06 &  3.00 && 2.49E-09 &  5.25\\
        & 160 && 2.09E-07 &  3.00 && 7.75E-11 &  5.00\\
\bottomrule
\end{tabular}
\caption{$L^2-$ errors of the DG approximation $u_h$ and the post-processed solution $u^\star_h$ for the linear hyperbolic equation~\eqref{eq:ex1}, see Section~\ref{section-rk}.}
\label{table:ex1}
\end{table}


\begin{figure}[!ht]z
    \centering
    \includegraphics[width=.7\textwidth]{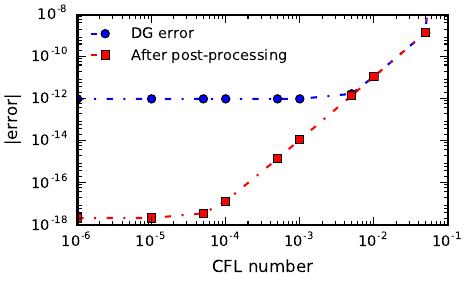}
    
    \begin{center}
    \caption{\label{fig:ex1cfl}
        The DG and post-processed errors when using the third order Runge-Kutta method with different CFL numbers for the linear hyperbolic equation~\eqref{eq:ex1}.  A sufficiently small CFL number means the error has reached the horizontal position, the DG error (blue) and the post-processed error (red), which the error is dominated by the spatial error. 
}
     \end{center}
\end{figure}

\subsection{Challenges of Time Discretization}
From the previous example, we see that one has to use an extremely small time step size for the third-order Runge-Kutta method.  Otherwise, the error of the post-processed approximation will be dominated by the temporary error instead of the desired spatial error.  Generally, when solving the semi-discretized system~\eqref{eq:system}, the time step size needs to satisfy
\begin{equation}
  \label{eq:t-cond}
  \Delta t = \min\{\Delta t_{stable}, \Delta t_{acc}\},
\end{equation}
where $\Delta t_{stable}$ represents the restriction of the stability (linear/nonlinear) requirement, and $\Delta t_{acc}$ represents the restriction of the accuracy requirement.  For the standard DG method (without considering its post-processing), $\Delta t$ is usually dominated by the stability requirement $\Delta t_{stable}$ when the Runge-Kutta methods are used to solve~\eqref{eq:system}.  It leads to one motivation of studying of SSP methods: to improve the limitation of $\Delta t_{stable}$.  However, once the post-processing technique is applied, the situation changes, the accuracy restriction $\Delta t_{acc}$ becomes the limitation.  The reason is straightforward, the accuracy order of the time discretization does not match with the spatial discretization.  The Runge-Kutta method~\eqref{eq:RK3} has convergence rate of $3$ in time, but the post-processed solution has the superconvergence order of $2p+1$ in space.  Clearly, to obtain the same level of accuracy in both space and time, the time step size has to satisfy the relation that  
\begin{equation}
  \label{eq:order}
  {{\Delta t}_{acc}}^3 = \mathcal{O}({\Delta x}^{2p+1}) \quad \Longrightarrow \quad \Delta t_{acc} = \mathcal{O}\left({\Delta x}^{\frac{2p+1}{3}}\right).
\end{equation}
Under the limitation of~\eqref{eq:order}, we can see that once $p \geq 3$, $\Delta t_{acc}$ will have to take a high order form of ${\Delta x}$ ($\Delta t_{stable}$ is only the first order of $\Delta x$), and this means the proper time step size $\Delta t \leq \Delta t_{acc}$ becomes extremely small.  For example, when $p = 4$ (see Figure~\ref{fig:ex1cfl}, then $\Delta t = \mathcal{O}({\Delta x}^3)$ which usually can not be accepted in practice.  Of course, one can use other higher-order Runge-Kutta methods, such as the classical fourth order Runge-Kutta method  
\begin{equation}
  \label{eq:RK4}
  \begin{split}
    k^{(1)} & = \mathcal{L}_h(u_h^n), \\
    k^{(2)} & = \mathcal{L}_h(t_n + \frac{1}{2}\Delta t, u_h^n + \frac{1}{2}\Delta t k^{(1)}), \\
    k^{(3)} & = \mathcal{L}_h(t_n + \frac{1}{2}\Delta t, u_h^n + \frac{1}{2}\Delta t k^{(2)}), \\
    k^{(4)} & = \mathcal{L}_h(t_n + \Delta t, u_h^n + \Delta t k^{(3)}), \\
    u_h^{n+1} & = u_h^n + \frac{1}{6}\Delta t \left(k^{(1)} + 2 k^{(2)} + 2 k^{(3)} + k^{(4)}\right), 
  \end{split}
\end{equation}
the fourth order (ten stages) SSP Runge-Kutta method~\cite{Gottlieb:2009}, etc.  Using higher-order time discretizations slightly relieve the restriction~\eqref{eq:order} of $\Delta t$, but it does not solve the problem completely.  Due to the superconvergence property, the spatial order of accuracy increases fast with the polynomial degree $p$.  Therefore, using fixed order time integrators will eventually face the same dilemma: computationally expensive for small $p$ and not accurate enough for large $p$.  To solve this problem systematically, we have to investigate efficient time discretizations which satisfy the following basic properties: 
\begin{itemize}
  \item It can be constructed systematically for the arbitrary order of accuracy.
  \item It should have good stability property. 
  \item It needs to have an explicit scheme for hyperbolic equations with DG methods.  Further, it is expected to have an implicit scheme and preferred to have a semi-implicit (IMEX) scheme for other types of equations, such as convection-diffusion equations.
\end{itemize}
Besides above basic properties, there are also some advanced properties that considered as a plus,
\begin{itemize}
  \item Good strong stability preserving property for hyperbolic equations. 
  \item Suitable structure for parallel in time algorithms, such as Parareal~\cite{Lions:2001}, PFASST~\cite{Emmett:2012}, multigrid in time, etc.
  \item Good $hp$ adaptive structure to be coupled with DG methods.
\end{itemize}
These superior properties do not directly relate to the current development bottleneck of the post-processing technique.  However, those state-of-the-art developments of time discretization will play a role in the future research. 

\section{Efficient Time Discretizations}
To deal about the challenges mentioned in the previous section, we propose two state-of-the-art time discretization techniques in this section.  

\subsection{The SDG Method}
The main focus of this paper is the SDG method, which has been introduced recently by Li, Benedusi and Krause~\cite{Li:2017}. The SDG method is an iterative method derived from the DG method for time discretization, with the order of accuracy increased by one for each additional iteration. Respect to the post-processing technique, attractively, the SDG method can be constructed systematically for the arbitrary order of accuracy, and its explicit scheme has competitive stability compared existed time integrators. In the following, we first briefly present this method as described in~\cite{Li:2017}, and then provide a derivation of its explicit scheme from the point of view of a correction method.   

Consider the ODE system 
\begin{equation}
  \label{eq:ODEs}
  \begin{split}
    & u_t  = f(t,u(t)), \qquad t \in [0,T], \\
    & u(0)  = u_0,
  \end{split}
\end{equation}
where $u_0, u(t) \in \mathbb{R}^{n_s}$. 

We divide the time interval $[0, T]$ into $N$ subintervals by $0 = t_0 < t_1 < \cdots < t_n < \cdots < t_N = T$.  Let $I_n = [t_n, t_{n+1}]$, $\Delta t_n = t_{n+1} - t_n$, and $u_n$ denotes the numerical approximation of $u(t_n)$ with $u_0 = u(0)$.  

Then, we define the DG approximation space
\begin{equation}
  \label{eq:space}
    V^p_h = \left\{v_h: v_h\big\vert_{I_n} \in \mathbb{P}^p(I_n)\right\},
\end{equation}
where $\mathbb{P}^p$ denote the space of all polynomials of degree up to $p$.  The DG approximation, $u_h$, for solving~\eqref{eq:space} satisfies the weak formulation
\begin{equation}  
    \label{eq:weak}
    -\int_{t_n}^{t_{n+1}} u_h {(v_h)}_t \,dt + u_h(t_{n+1}^{-})v_h(t_{n+1}^{-}) - u_h(t_{n}^{- })v_h(t_{n}^{+}) = \int_{t_n}^{t_{n+1}} f(t,u_h(t))v_h \,dt, 
\end{equation}
for all $v_h \in V^p_h$,  here $u_h(t_n^{-}) = u_n$ is given from the previous time step.  On the interval $[t_n, t_{n+1}]$, $u_h$ is constructed in the nodal form that 
\begin{equation} 
  \label{eq:uh}
  u_h(t) = \sum\limits_{m = 0}^p u_{n,m}\ell_{n,m}(t), \qquad t \in [t_n, t_{n+1}],
\end{equation}
where $\left\{\ell_{n,m}(t)\right\}$ is the basis of Lagrange polynomials of degree $p$ with the right {\bf Gau\ss-Radau points} ${\left\{t_{n,m}\right\}}_{m=0\ldots p}$, so $u_{n+1} = u_{n,p}$.  By inserting the nodal representation~\eqref{eq:uh} into~\eqref{eq:weak}, we obtain, for $0 \leq j \leq p$, 
\[
  -\sum\limits_{m=0}^p u_{n,m}\int_{t_n}^{t_{n+1}} \ell_m(t)\ell'_j(t)\, dt + u_{n,p}\delta_{jp} - u_n\ell_j(t_n^{+}) = \int_{t_n}^{t_{n+1}}f(t,u_h(t))\ell_j(t)\,dt.
\]
By denoting $U = {[u_{n,0}, u_{n,1}, \ldots, u_{n,p}]}^T$, we can rewrite the above formula into a matrix form on the reference interval $[-1,1]$  
\[
  L U + \frac{\Delta t_n}{2}F(U) + B = 0,
\]
with  
\[
  L_{i,j} = \int_{-1}^{1} \ell'_i(t)\ell_j(t)\,dt - \delta_{ip}\delta_{jp}, \qquad B = {u_n\left[\ell_0(-1), \ell_1(-1), \ldots, \ell_p(-1)\right]}^T,
\]
and 
\[
  \begin{split}
    F(U) & = {\left[\int_{-1}^1 f\left(t, \sum\limits_{m=0}^p u_{n,m}\ell_m(t)\right)\ell_0(t)\,dt, \ldots, \int_{-1}^1 f\left(t, \sum\limits_{m=0}^p u_{n,m}\ell_m(t)\right)\ell_p(t)\,dt\right]}^T \\
    &\approx  {\left[\omega_0 f_{n,0}, \omega_1 f_{n,1}, \ldots, \omega_p f_{n,p}\right]}^T. 
  \end{split}
\]
where ${\left\{\omega_{m}\right\}}_{m=0\ldots p}$ be the corresponding quadrature weights of right Gau\ss-Radau points ${\left\{t_{n,m}\right\}}_{m=0\ldots p}$.

To present the SDG method, we introduce the coefficient matrix $\tilde{L} = \left\{\tilde{L}_{i,j}\right\} = L_\Delta L^{-1}$, where 
\[
    L_\Delta = \left[\begin{array}{ccccc}
            -1  &    & & & \\
            1  & -1 & & &\\
               &\ddots & \ddots & &\\
               & & 1 & -1 & \\
               &&  & 1 & -1
    \end{array}\right],
\]
is an approximation of $L$ (see~\cite{Li:2017}).  Finally, we give the algorithm of the explicit SDG method in Algorithm~\ref{algorithm:SDG}. 

\begin{algorithm}
  \caption{Explicit SDG Scheme, $ExSDG_p^{K}$.} \label{algorithm:SDG}
  \DontPrintSemicolon
  \KwData{$u_n$ from the previous time step.}
  \KwResult{Solution $u_{n+1}$ for the new time step.}
  \hfill\\
  \tcp{Compute the initial approximation with the explicit Euler method}
  $u_{n,0}^0 = u_{n} + \left(t_{n,0} - t_n\right) f(t_n, u_n)$\;
  \For{$m = 0 \ldots p-1$}{
    $u_{n,m+1}^0 = u_{n,m}^0 + \left(t_{n,m+1} - t_{n,m}\right) f(t_{n,m}, u^0_{n,m})$;
  }
  \hfill\\
  \tcp{Compute successive approximation}
  \For{$k = 1 \ldots K-1$}{
    $u^{k+1}_{n,0}  =  u_n + \frac{\Delta t_n}{2}\sum\limits_{j=0}^p {\tilde{L}}_{0,j} \omega_j  f(t_{n,j}, u^{k}_{n,j})$\;
    \For{$m = 0 \ldots p-1$}{
      $u^{k+1}_{n,m+1}  = u^{k+1}_{n,m} + \frac{\Delta t_n}{2}\omega_{m}\left(f(t_{n,m},u^{k+1}_{n,m}) - f(t_{n,m}, u^{k}_{n,m})\right)$ \; $\qquad\qquad\qquad\qquad + \frac{\Delta t_n}{2} \sum\limits_{j=0}^p \tilde{L}_{m+1,j}\omega_j f(t_{n,j},u^{k}_{n,j})$;
    }
  }
  $u_{n+1} = u_{n,p}^K$\;
\end{algorithm}

\subsubsection{Correction Method Based on Weak Formulation}
As an appendix of Algorithm~\ref{algorithm:SDG}, we present a derivation for the explicit SDG scheme from a correction method based on the DG weak formulation.

Denote $u^k \in V_h^p$ the $k$-th approximation for the DG solution $u_h$, the defect equation is given by
\[
  \delta^k = u_h - u^k \in V_h^p,
\]
where
\[ 
  u^k(t) = \sum\limits_{m = 0}^p u^k_{n,m}\ell_{n,m}(t), \quad \delta^k(t) = \sum\limits_{m = 0}^p \delta^k_{n,m}\ell_{n,m}(t), \qquad t \in [t_n, t_{n+1}].
\]
Substituting $u_h = u^k + \delta_h$ into the weak form~\eqref{eq:weak}, we have
\begin{align*} 
    &\int_{t_n}^{t_{n+1}} \delta^k {(v_h)}_t \,dt - \delta^k(t_{n+1}^{-})v_h(t_{n+1}^{-}) +  \int_{t_n}^{t_{n+1}} \left(f(t,u^k + \delta^k) - f(t,u^k)\right)v_h \,dt \\
    =\, & -\int_{t_n}^{t_{n+1}} u^k {(v_h)}_t \,dt + u^k(t_{n+1}^{-})v_h(t_{n+1}^{-}) - u_h(t_{n}^{- })v_h(t_{n}^{+}) - \int_{t_n}^{t_{n+1}}f(t, u^k)v_h \,dt.
\end{align*}
By denote 
\[
  U^k = {[u^k_{n,0}, u^k_{n,1}, \ldots, u^k_{n,p}]}^T, \quad \boldsymbol{\delta}^k = {[\delta^k_{n,0}, \delta^k_{n,1}, \ldots, \delta^k_{n,p}]}^T
\]
we rewrite it in the matrix form
\begin{equation}
  \label{eq:weak-delta}
  L^{-1}\left(L\boldsymbol{\delta}^k + \frac{\Delta{t_n}}{2}\left(F(U^k+\boldsymbol{\delta}^k) - F(U^k)\right)\right) = - U^k - \frac{\Delta{t_n}}{2}L^{-1}F(U^k) - L^{-1}B.
\end{equation}
So far, we note that the difficulty of getting $\boldsymbol{\delta}^k$ from~\eqref{eq:weak-delta} is the same as solving the weak formulation~\eqref{eq:weak}.  To obtain a simple and explicit scheme, we simplify the left-hand side of~\eqref{eq:weak-delta} by approximating $\boldsymbol{\delta}^k$ in the first order.  First, we replace the $L$ in the left-hand side of~\eqref{eq:weak-delta} with $L_\Delta$, then~\eqref{eq:weak-delta} becomes
\[
  L_\Delta\boldsymbol{\delta}^k + \frac{\Delta{t_n}}{2}\left(F(U^k+\boldsymbol{\delta}^k) - F(U^k)\right) = -L_\Delta U^k - \frac{\Delta t_n}{2}\tilde{L}F(U^k) - \tilde{L}B.
\]
Further, we approximate $\delta^k$ by a piecewise constant representation that 
\begin{equation}
  \label{eq:delta1}
  \delta^k(t) \approx \sum\limits_{m=0}^{p} \delta_{n,m}\chi[t_{n,m-1},t_{n,m}), \quad t \in [t_n, t_{n+1}],
\end{equation}
where $t_{n,-1} = t_n$ and $\chi$ is the standard characteristic function.  For the nonlinear term 
\begin{equation}
  \label{eq:non1}
  \int_{t_n}^{t_{n+1}} \left(f(t,u^k + \delta^k) - f(t,u^k)\right)v_h \,dt,
\end{equation}
to generate an explicit scheme, we choose the test function $v_h$ to be the basis of Lagrange polynomials of degree $p$ with points ${\left\{t_{n,m}\right\}}_{m=-1\ldots p-1}$.  Then, by using the Gau\ss-Radau quadrature, we have 
\[
  F(U^k+\boldsymbol{\delta}^k) \approx F_\Delta(U^k+\boldsymbol{\delta}^k) = {\left[0, \omega_0 f(t_{n,0}, u_{n,0}^k+\delta_{n,0}^k), \ldots, \omega_{p-1}f(t_{n,p-1}, u_{n,p-1}^k+\delta_{n,p-1}^k)\right]}^T,
\]
and
\[
  F(U^k) \approx F_\Delta(U^k) =  {\left[0, \omega_0 f(t_{n,0}, u_{n,0}^k), \ldots, \omega_{p-1}f(t_{n,p-1}, u_{n,p-1}^k)\right]}^T.
\]
Finally, we update the new approximation $U^{k+1} = U^k+\boldsymbol{\delta}^k$, that 
\[
  -L_\Delta U^{k+1} = \frac{\Delta{t_n}}{2}\left(F_\Delta(U^{k+1}) - F_\Delta(U^k)\right) + \frac{\Delta t_n}{2}\tilde{L}F(U^k) + \tilde{L}B.
\]
By writing the above formula component-wise, we have 
\[ 
  \begin{split}
    u^{k+1}_{n,0} & =  u_n + \frac{\Delta t_n}{2}\sum\limits_{j=0}^p {\tilde{L}}_{0,j} \omega_j  f(t_{n,j}, u^{k}_{n,j}) \\
    u^{k+1}_{n,m+1} & = u^{k+1}_{n,m} + \frac{\Delta t_n}{2}\omega_{m}\left(f(t_{n,m},u^{k+1}_{n,m}) - f(t_{n,m}, u^{k}_{n,m})\right) \\
    &\qquad\qquad + \frac{\Delta t_n}{2} \sum\limits_{j=0}^p \tilde{L}_{m+1,j}\omega_j f(t_{n,j}, u^{k}_{n,j}), \qquad 0 \leq m \leq p-1.
  \end{split}
\] 
It is the explicit SDG scheme as shown in Algorithm~\ref{algorithm:SDG}.
\begin{rem}
  If we choose the same basis function $\left\{\ell_{n,m}(t)\right\}$ to approximate term~\eqref{eq:non1}, then the implicit SDG scheme will be obtained.  One can also choose the piecewise constant basis $\left\{\chi [t_{n,m-1},t_{n,m})\right\}$ to get the same result.  In fact, term~\eqref{eq:non1} can be approximated in various ways, here we simply choose the one leads to the same scheme shown in Algorithm~\ref{algorithm:SDG}.  
\end{rem}

A more detailed introduction, such as the stability analysis, the multigrid algorithm of the SDG method, can be found in~\cite{Li:2017}.  Here, we only present the error estimate of the explicit SDG method as follows. 
\begin{thm}\label{thm:SDG}   
  The explicit SDG methods with $K$ time of iterations are $\min\{2p+1, K+1\}$ order accurate methods for the ODE system~\eqref{eq:ODEs}. 
\end{thm}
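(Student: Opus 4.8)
The plan is to treat Algorithm~\ref{algorithm:SDG} as a one-step method whose output is $u_{n+1}=u_{n,p}^{K}$, the $K$-th iterate evaluated at the downwind node $t_{n,p}=t_{n+1}$, and to prove the order by combining two ingredients through a triangle inequality: (A) the fixed point of the sweep---the solution $u_{n,p}$ of the weak form~\eqref{eq:weak} with the Gau\ss--Radau quadrature---approximates $u(t_{n+1})$ to order $2p+1$; and (B) each correction sweep raises by one the order to which $u_{n,p}^{k}$ approximates $u(t_{n+1})$, until the accuracy of the fixed point is saturated. For a single step started from exact data $u_n=u(t_n)$ I would write
\[
  u_{n,p}^{K}-u(t_{n+1}) = \bigl(u_{n,p}^{K}-u_{n,p}\bigr) + \bigl(u_{n,p}-u(t_{n+1})\bigr),
\]
control the second term by (A) and the first by (B), and then lift the resulting local bound to a global order via a standard one-step stability (discrete Gronwall) argument, which costs one power of $\Delta t$ and yields the global order $\min\{2p+1,K+1\}$.

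For (A), I would invoke the known equivalence between the DG time-stepping~\eqref{eq:weak} with $\mathbb{P}^p$ and the $(p+1)$-stage Radau IIA collocation method at the right Gau\ss--Radau nodes. Because the Radau quadrature with $p+1$ nodes is exact for polynomials of degree $\le 2p$, the value $u_{n,p}=u_h(t_{n+1}^{-})$ is exactly the Radau IIA approximation, whose downwind superconvergence is of order $2p+1$, i.e.\ locally $u_{n,p}-u(t_{n+1})=\mathcal{O}(\Delta t^{\,2p+2})$. This is the source of the cap $2p+1$ in the statement; I would cite this DG-for-ODE superconvergence result (as in~\cite{Li:2017}) rather than reprove it.

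For (B), let $E^{k}=U^{k}-U$ be the vector of nodal errors between the $k$-th iterate and the fixed point $U=[u_{n,0},\dots,u_{n,p}]^{T}$, which satisfies $-L_\Delta U=\tfrac{\Delta t_n}{2}\tilde L F(U)+\tilde L B$. Subtracting this relation from the two update lines of Algorithm~\ref{algorithm:SDG} and linearising the flux differences with the bounded Jacobian $J$ of $F$ ($F(U^{k+1})-F(U^{k})\approx J(U^{k+1}-U^{k})$ and $F(U^{k})-F(U)\approx J E^{k}$) gives a recursion of the form $(I-\tfrac{\Delta t_n}{2}S)E^{k+1}=\tfrac{\Delta t_n}{2}\,T\,E^{k}+\mathcal{O}(\|E^{k}\|^{2})$, where $S$ is strictly lower triangular (the explicit forward-substitution coupling of consecutive nodes) and $T$ collects the $\mathcal{O}(1)$ matrix $\tilde L$ and quadrature weights. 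The key point is that the node-$0$ update carries no $E^{k+1}$ dependence, so $E^{k+1}_{0}=\mathcal{O}(\Delta t_n)\|E^{k}\|$, and since $S$ is nilpotent the forward substitution keeps every node at this size, giving
\[
  \|E^{k+1}\|_{\infty}\le C\,\Delta t_n\,\|E^{k}\|_{\infty}.
\]
The explicit Euler predictor is first order, so $\|E^{0}\|_{\infty}=\mathcal{O}(\Delta t^{2})$ locally, and the recursion yields $\|E^{k}\|_{\infty}=\mathcal{O}(\Delta t^{\,k+2})$; in particular $u_{n,p}^{k}-u_{n,p}=\mathcal{O}(\Delta t^{\,k+2})$. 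Substituting this together with (A) into the splitting gives local error $\mathcal{O}(\Delta t^{\,\min\{k+2,\,2p+2\}})$ for the $k$-th iterate, hence global order $\min\{k+1,2p+1\}$; taking $k=K$ proves the theorem.

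The main obstacle is step (B): proving that the contraction factor is genuinely $\mathcal{O}(\Delta t_n)$ and uniform in $k$. One must verify that replacing $L$ by $L_\Delta$ and using the low-order quadrature in the increment leaves on the right-hand side precisely the prefactor $\tfrac{\Delta t_n}{2}$ times $\mathcal{O}(1)$ matrices (so the propagation is $\mathcal{O}(\Delta t_n)$, not $\mathcal{O}(1)$), and that the $I$ on the left cancels the $E^{k+1}$ term so that the node-$0$ anchoring at $\mathcal{O}(\Delta t_n)$ is not amplified to $\mathcal{O}(1)$ along the node chain. Care is also needed to ensure $\|E^{k}\|=\mathcal{O}(\Delta t^{\,k+2})$ holds simultaneously at all nodes (so that the downwind component inherits it) and that the nonlinear remainders are truly higher order; I would first check the exact linear recursion for $f(u)=au$ as a warm-up, then handle smooth nonlinear $f$ by Lipschitz and Taylor estimates.
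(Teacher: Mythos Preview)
Your proof sketch is correct and is, in fact, more than the paper provides: the paper's ``proof'' of Theorem~\ref{thm:SDG} consists solely of a citation to~\cite{Li:2017}, with no argument given. What you outline is exactly the standard argument one finds in that reference (and in the SDC literature generally): split via the fixed point of the sweep, identify the fixed point with the Radau~IIA collocation solution so the downwind value inherits the $2p+1$ superconvergence, and show each correction sweep contracts the nodal error to the fixed point by a factor $\mathcal{O}(\Delta t)$ because the explicit coupling matrix is strictly lower triangular while the right-hand side carries a $\Delta t$ prefactor. Your verification that the sweep's fixed point really is the DG/collocation solution (i.e.\ that the $F_\Delta$ terms cancel and the remaining $\tilde L$-weighted relation is equivalent to $LU+\tfrac{\Delta t_n}{2}F(U)+B=0$) is the one algebraic point worth making explicit, since the paper's derivation replaces $L$ by $L_\Delta$ only on the defect side; multiplying the fixed-point identity $-L_\Delta U=\tfrac{\Delta t_n}{2}\tilde LF(U)+\tilde LB$ on the left by $-L L_\Delta^{-1}$ recovers the collocation equations directly. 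The only place to tighten the write-up is the predictor bound: you want $\|E^{0}\|_\infty=\mathcal{O}(\Delta t_n^{2})$ as the \emph{local} (one macro-step) error of the composite forward Euler measured against the fixed point, which follows since both $U^0$ and $U$ are $\mathcal{O}(\Delta t_n^{2})$ away from the exact nodal values.
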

\begin{proof}
  See \cite{Li:2017}.
\end{proof}
Attractively, Theorem~\ref{thm:SDG} shows the upper bound of the accuracy order of the SDG method is $2p+1$ instead of $p+1$, the general accuracy order for methods constructed with polynomials of degree $p$ (or $p+1$ nodes). This property can be referred as superconvergence in time, which is directly inherited from applying the DG method for solving ODEs, see~\cite{Adjerid:2002}. Once the SDG method is coupled with the DG spatial discretization and post-processing technique, a space-time superconvergence approximation is directly obtained. In addition, the SDG method has many other attractive features, such as semi-implicit schemes, the multigrid algorithm in time, a suitable structure for parallel in time algorithms, etc. These superior features will be discussed in the future works.    
\begin{rem}
  We note that using Gau\ss-Radau nodes in the nodal form~\eqref{eq:uh} is crucial for Theorem~\ref{thm:SDG}.  Other choices of nodal points will normally cause the reduction of the accuracy order.  For example, Gau\ss-Lobatto nodes lead to the accuracy order of $2p$ and uniform nodes lead to the accuracy order of $p+1$. 
\end{rem}

\subsection{The Spectral Deferred Correction Method}
As mentioned earlier, the SDG method introduced in the previous section is a latest research result. Although the SDG method has demonstrated its excellent properties and potentials, the related researches such as the application of the semi-discretized system, parallel algorithm, efficient implementation, etc.\ are still under development. Therefore, as an alternative, we briefly introduce another candidate: the spectral deferred correction (SDC) method introduced by Dutt, Greengard, and Rokhlin~\cite{Dutt:2000} in 2000. The SDC method is based on the integral form of~\eqref{eq:ODEs}, and corrected iteratively with low-order time integration methods. Similar to the SDG method, the order of accuracy increased by one for each additional iteration. As a method introduced for more than a decade, there are many previous studies of the SDC method, such as the semi-implicit SDC method~\cite{Minion:2003}, parallel in time algorithms~\cite{Emmett:2012}, the multi-level SDC method~\cite{Speck:2015}, etc. Furthermore, there are already some applications of the SDC method respected to DG methods. For example, the SSP property of the SDC method has been studied for the low-order case~\cite{Liu:2008}, and the SDC method has shown to be an efficient time discretization for local discontinuous Galerkin methods~\cite{Xia:2007}.

We first rewrite ODEs~\eqref{eq:ODEs} into in the integral form of the interval $[t_n, t_{n+1}]$:
\begin{equation}
  \label{eq:integral}
  u(t_{n+1}) = u(t_n) + \int_{t_n}^{t_{n+1}}f(\xi, u(\xi))\,d\xi.
\end{equation}
Then we divide the interval $[t_n, t_{n+1}]$ by choosing the Gau\ss-Radau nodes ${\left\{t_{n,m}\right\}}_{m=0\ldots p}$ such that $t_n < t_{n,0} < t_{n,1} < \cdots < t_{n,p} = t_{n+1}$, and $\Delta t_{n,m} = t_{n,m+1} - t_{n,m}$. We note that the Gau\ss-Radau nodes is chosen to create an easy comparison to the SDG method (computational cost, accuracy order, etc.), one can also use the Gau\ss-Lobatto or Chebyshev nodes. Start with $u_n$, we give the algorithm of the explicit SDC method in Algorithm~\ref{algorithm:SDC}.   
\begin{algorithm}
  \caption{Explicit SDC Scheme, $ExSDC_p^{K}$.} \label{algorithm:SDC}
  \DontPrintSemicolon
  \KwData{$u_n$ from the previous time step.}
  \KwResult{Solution $u_{n+1}$ for the new time step.}
  \hfill\\
  \tcp{Compute the initial approximation with the explicit Euler method}
  $u_{n,0}^0 = u_{n} + \left(t_{n,0} - t_n\right) f(t_n, u_n)$\;
  \For{$m = 0 \ldots p-1$}{
    $u_{n,m+1}^0 = u_{n,m}^0 + \Delta t_{n,m} f(t_{n,m}, u^0_{n,m})$;
  }
  \hfill\\
  \tcp{Compute successive approximation}
  \For{$k = 1 \ldots K-1$}{
    $u^{k+1}_{n,0}  =  u_n$\;
    \For{$m = 0 \ldots p-1$}{
      $u^{k+1}_{n,m+1}  = u^{k+1}_{n,m} + \Delta t_{n,m}\left(f(t_{n,m},u^{k+1}_{n,m}) - f(t_{n,m}, u^{k}_{n,m})\right)$\;$\qquad\qquad\qquad\qquad + \frac{\Delta t_n}{2} \sum\limits_{j=0}^p S_{m+1,j} f(t_{n,j},u^{k}_{n,j})$;
    }
  }
  $u_{n+1} = u_{n,p}^K$\;
\end{algorithm}
In Algorithm~\ref{algorithm:SDC}, the coefficients $S_{m+1,j}$ are given by 
\[
  S_{m+1,j} = \frac{2}{\Delta t_n}\int_{t_{n,m}}^{t_{n,m+1}} \ell_{j}(t)\, dt,
\]
where $\left\{\ell_{j}(t)\right\}$ is the basis of Lagrange polynomials of degree $p$ with the nodes ${\left\{t_{n,m}\right\}}_{m=0\ldots p}$, see~\cite{Xia:2007} for details. 

\begin{thm}\label{thm:SDC}
  The explicit SDC methods with $K$ time of iterations are $\min\{2p+1, K+1\}$ order accurate methods for the ODE system~\eqref{eq:ODEs}. 
\end{thm}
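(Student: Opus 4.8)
The plan is to split Theorem~\ref{thm:SDC} into two independent facts, mirroring the structure behind Theorem~\ref{thm:SDG}: (i) the \emph{fixed point} of the SDC sweeps is a collocation solution whose value at $t_{n+1}=t_{n,p}$ is accurate to order $2p+1$, and (ii) each correction sweep in Algorithm~\ref{algorithm:SDC} raises the order of the iterate by one, until the level set by (i) is saturated. Granting both, the explicit-Euler predictor is globally first order, so the $K$-th iterate $u^K_{n,p}$ has order $\min\{K+1,\,2p+1\}$, which is the assertion.

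First I would identify the limit of the iteration. Setting $u^{k+1}=u^k=:u^\infty$ in Algorithm~\ref{algorithm:SDC} annihilates the low-order difference $f(t_{n,m},u^{k+1}_{n,m})-f(t_{n,m},u^{k}_{n,m})$ and leaves, for $0\leq m\leq p-1$,
\begin{equation}
  \label{eq:fp}
  u^\infty_{n,m+1} = u^\infty_{n,m} + \frac{\Delta t_n}{2}\sum_{j=0}^p S_{m+1,j}\, f(t_{n,j},u^\infty_{n,j}), \qquad u^\infty_{n,0}=u_n.
\end{equation}
Since $\frac{\Delta t_n}{2}S_{m+1,j}=\int_{t_{n,m}}^{t_{n,m+1}}\ell_j(t)\,dt$, summing \eqref{eq:fp} telescopically exhibits $u^\infty$ as the discrete collocation solution built on the $p+1$ right Gau\ss--Radau nodes. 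Its accuracy at the endpoint $t_{n,p}=t_{n+1}$ is then the classical Radau~IIA superconvergence: the $(p+1)$-node Gau\ss--Radau rule is exact for polynomials of degree $2p$, and with $f\in\mathcal{C}^\infty$ this degree of exactness upgrades the endpoint order to $2p+1$. This is ingredient~(i).

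Next I would quantify the gain per sweep. Let $e^k_{n,m}=u^\infty_{n,m}-u^k_{n,m}$ measure the distance of the $k$-th iterate to the fixed point. Subtracting the update of Algorithm~\ref{algorithm:SDC} from \eqref{eq:fp} produces a recurrence for $e^{k+1}$ in which the quadrature term $\frac{\Delta t_n}{2}S$ acts only on the defect $f(t_{n,j},u^\infty_{n,j})-f(t_{n,j},u^{k}_{n,j})$ and therefore already carries a factor $\Delta t_n$, while the explicit sub-stepping $\Delta t_{n,m}\bigl(f(\cdot,u^{k+1})-f(\cdot,u^{k})\bigr)$ supplies the remaining near-identity coupling; linearizing the nonlinearity by the mean value theorem and using the smoothness of $f$, a discrete Gronwall estimate gives $\|e^{k+1}\|\leq C\,\Delta t_n\,\|e^k\|$ uniformly in the step. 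Because the Euler predictor satisfies $\|e^0\|=\mathcal{O}(\Delta t_n^{2})$ on one step, induction yields $\|e^K\|=\mathcal{O}(\Delta t_n^{\,K+2})$; accumulating this per-step defect over the $\mathcal{O}(1/\Delta t)$ steps gives the iterate global order $K+1$, and combining with the $\mathcal{O}(\Delta t^{2p+1})$ accuracy of $u^\infty$ from (i) via a standard one-step/global stability argument closes the estimate at $\min\{K+1,\,2p+1\}$.

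I expect the crux to be the per-sweep estimate in (ii): showing that the error operator loses \emph{exactly} one power of $\Delta t_n$ --- not merely that it is a contraction --- and that this gain survives the nonlinear term. The cleanest route is to expand both the iterate and $u^\infty$ in powers of $\Delta t_n$ about the exact Picard solution of \eqref{eq:integral} and track the lowest surviving defect, verifying that the correction step removes precisely the leading power at each level. The saturation at $2p+1$ is then forced because $u^\infty$ itself retains an irreducible $\mathcal{O}(\Delta t^{2p+1})$ collocation error that no further sweep can remove. The right Gau\ss--Radau choice of nodes is decisive here, since it fixes the quadrature exactness and hence the cap; as the remark following Theorem~\ref{thm:SDG} indicates, Gau\ss--Lobatto nodes would lower the cap to $2p$ and uniform nodes to $p+1$.
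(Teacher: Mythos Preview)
The paper does not prove Theorem~\ref{thm:SDC}; it simply cites~\cite{Hagstrom:2006}. Your two-part strategy --- identify the fixed point of the sweeps with the right Gau\ss--Radau collocation scheme (order $2p+1$ at the endpoint), then show that each explicit-Euler correction gains exactly one power of $\Delta t_n$ via a discrete Gronwall argument on $e^k=u^\infty-u^k$ --- is precisely the argument in that reference and in the standard SDC convergence literature, so in substance your plan coincides with what the citation points to.

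One caveat on part~(i): as Algorithm~\ref{algorithm:SDC} is written here, the corrector resets $u^{k+1}_{n,0}=u_n$ and the weights $S_{m+1,j}$ integrate only over $[t_{n,m},t_{n,m+1}]$ for $m\geq 0$, so your telescoped fixed-point relation is not literally the Radau~IIA collocation system --- the contribution $\int_{t_n}^{t_{n,0}}\sum_j\ell_j(t)f_{n,j}\,dt$ is absent, and in particular $u^\infty_{n,0}=u_n$ rather than the collocation value at $t_{n,0}$. In most SDC presentations (including the cited one) that first sub-interval is folded into the quadrature, which restores the collocation identification; to make your argument airtight you should either make that correction to the algorithm or add a short argument that the omission does not lower the endpoint order. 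This is a bookkeeping detail of the paper's presentation, not a flaw in your overall strategy.
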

\begin{proof}
  See~\cite{Hagstrom:2006}.
\end{proof}

\begin{rem}
  We note that with same the polynomial degree $p$ and the number of iteration $K$, the SDG method (Algorithm~\ref{algorithm:SDG}) and the SDC method (Algorithm~\ref{algorithm:SDC}) have the same computational cost and accuracy order. Therefore, the two methods have very similar behaviors in the primary numerical examples in the next section. The reason that we focus on the SDG method is that the SDG method is directly derived from the DG method and inherited its superconvergence property. Furthermore, the SDG method can be treated as a natural extension for the space-time DG formulation.  
\end{rem}

\subsection{Adaptive Strategy}
Adaptive strategies play an important role in practical applications.  Both the SDG and SDC methods are good candidates for the adaptive implementation.  Especially, the SDG method can take the similar $hp$ strategies for the DG method and can be easily apopted in a space-time adaptive framework.   

In this paper, respect to the post-processing technique, we consider a simple order adaptive strategy: adaptive the number of iterations, $K$.  From Theorem~\ref{thm:SDG} and Theorem~\ref{thm:SDC}, we see that if $K=2p$ both the SDG and SDC methods can achieve the accuracy order of $2p+1$ in time same as the post-processed approximation in space.   However, due to the stability (CFL) condition considering, the explicit numerical scheme usually satisfies $\Delta t < \Delta x$, it is may not be necessary to use as many as $2p$ iterations.  Instead, one can adapt the number of iterations according to different problems.  The error indicator can be simply defined as 
\begin{equation}
  \label{eq:adapt}
  \left|u_h^{K}(t_{n+1}^{-}) - u_h^{K-1}(t_{n+1}^{-})\right| = \left|u_{n,p}^K - u_{n,p}^{K-1}\right| < \varepsilon,
\end{equation}
where $\varepsilon$ is the tolerant error.  The theoretical analysis of how to choose the number of iterations (or say $\varepsilon$) requires a detailed analysis of the fully discretized schemes which will be studied in the forthcoming work. 
\begin{rem}
   We note that the error indicator~\eqref{eq:adapt} presented here is mainly for easy demonstration.  For more complicated problems, further posteriori error estimates need to be investigated to construct a better error indicator. 
\end{rem}

\section{Numerical Results}
In this section, we perform numerical experiments of the DG scheme coupled with different time integration methods (SDG and SDC) to linear and nonlinear equations for the post-processing technique.  We note that in this section, the quadruple precision is used to avoid round-off errors.  Also, all problems are computed on uniform meshes with periodic boundary conditions to prevent disturbances from spatial discretization. 

\subsection{Linear Hyperbolic Equation}
First, we consider the linear hyperbolic equation
\begin{equation}
  \label{eq:linear}
  \begin{split}
    u_t + u_x & = 0, \quad (x,t) \in [0,1]\times [0, T],\\
       u(x,0) & =  \sin(2\pi x),
  \end{split}
\end{equation}
with final time $T = 1$ over uniform meshes. For spatial discretization with polynomials of degree $p$, we use the $ExSDG_p^{2p}$ and $ExSDC_{p}^{2p}$ for discretizing in time. In this example, the CFL number is chosen to be $0.1$ for all the case ($\Delta t = 0.1\Delta x$). The $L^2$ norm error and the respect accuracy order are given in Table~\ref{table:linear}. Here, since both the SDG and SDC method lead to the same results (spatial error dominate), we do not distinguish them in Table~\ref{table:linear} and the rest of this section. In table~\ref{table:linear}, we can see that by using the purposed SDG/SDC method, the spatial accuracy shows the desired superconvergence order of $2p+1$ ( $2p+2$ numerically). In Figure~\ref{fig:Time}, for spatial element number $N = 160$, we compare the computational time cost of using the third-order Runge-Kutta~\eqref{eq:RK3} and the SDG/SDC methods when the same accuracy is achieved (see Table~\ref{table:linear}). In addition, the computational cost of the adaptive SDG/SDC method is also presented in Figure~\ref{fig:Time}, and this adaptive strategy can save up 20\% to 40\% of the computational cost. We can see that the Runge-Kutta method is more efficient than the SDG/SDC method only for $p = 1$ case, once $p > 1$ the SDG method starts to demonstrate its advantage. Furthermore, we provide the Table~\ref{table:ratio} to show the ratio of the time cost for both methods, which shows that the SDG method is significantly more efficient than the third-order Runge-Kutta method when $p$ is large.  
\begin{rem}
  We only present the comparison of computational time cost once to save space, since the ratio of the time cost between the Runge-Kutta and the SDG/SDC methods remains the same for each time step.         
\end{rem}

\begin{table}\centering 
\begin{tabular}{@{}cccccccc@{}}\toprule
  & & & \multicolumn{2}{c}{DG error} & \phantom{abc}& \multicolumn{2}{c}{After post-processing} \\ 
\cmidrule{4-5} \cmidrule{7-8}
Degree & $N$ & & $L^2$ error & Order && $L^2$ error & Order \\ 
\midrule
$p = 2$ & 20  && 1.07E-04 &   --  && 4.10E-06 &   -- \\
        & 40  && 1.34E-05 &  3.00 && 9.42E-08 &  5.44\\
        & 80  && 1.67E-06 &  3.00 && 2.40E-09 &  5.30\\
        & 160 && 2.09E-07 &  3.00 && 6.63E-11 &  5.18\\
\midrule
$p = 3$ & 20  && 2.06E-06 &   --  && 6.97E-08 &   --\\ 
        & 40  && 1.29E-07 &  4.00 && 2.82E-10 &  7.95\\ 
        & 80  && 8.07E-09 &  4.00 && 1.14E-12 &  7.95\\ 
        & 160 && 5.04E-10 &  4.00 && 4.67E-15 &  7.93\\ 
\midrule
$p = 4$ & 20  && 3.19E-08 &   --  && 2.19E-09 &   -- \\
        & 40  && 1.00E-09 &  4.99 && 2.20E-12 &  9.96\\
        & 80  && 3.14E-11 &  5.00 && 2.17E-15 &  9.99\\
        & 160 && 9.82E-13 &  5.00 && 2.12E-18 & 10.00\\
\midrule
\bottomrule
\end{tabular}
\caption{$L^2$ errors for the DG approximation together with the approximation after post-processing for the linear hyperbolic equation~\eqref{eq:linear}.}
\label{table:linear}
\end{table}

    

\begin{figure}[!ht]
    \centering
    \begin{minipage}[c]{0.04\textwidth}
        \begin{sideways}
          \centerline{$\,$}
        \end{sideways}
    \end{minipage}
    \begin{minipage}[c]{0.46\textwidth}
      \centerline{$\qquad\quad \text{DG error}$}
    \end{minipage}
    \begin{minipage}[c]{0.46\textwidth}
      \centerline{$\qquad\quad \text{After post-processing}$}
    \end{minipage}
    \begin{minipage}[c]{0.04\textwidth}
        \begin{sideways}
          \centerline{$p = 3$}
        \end{sideways}
    \end{minipage}
    \begin{minipage}[c]{0.46\textwidth}
    \includegraphics[width=1.\textwidth]{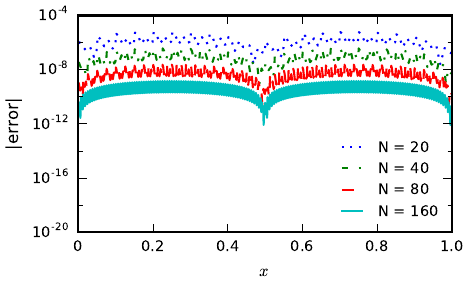}
    \end{minipage}
    \begin{minipage}[c]{0.46\textwidth}
    \includegraphics[width=1.\textwidth]{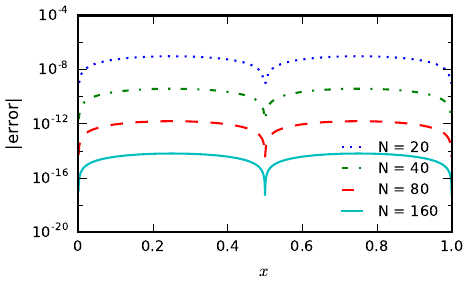}
    \end{minipage}
    
    \begin{minipage}[c]{0.04\textwidth}
        \begin{sideways}
          \centerline{$p = 4$}
        \end{sideways}
    \end{minipage}
    \begin{minipage}[c]{0.46\textwidth}
    \includegraphics[width=1.\textwidth]{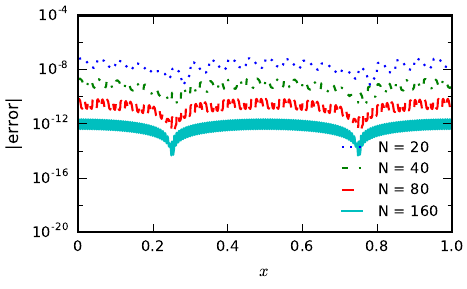}
    \end{minipage}
    \begin{minipage}[c]{0.46\textwidth}
    \includegraphics[width=1.\textwidth]{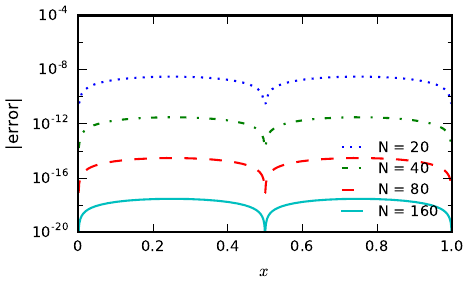}
    \end{minipage}

    \begin{center}
    \caption{\label{fig:linear}
        Comparison of the point-wise errors of the DG solution and the post-processed solution for the linear hyperbolic equation~\eqref{eq:linear}.
}
     \end{center}
\end{figure}

\begin{figure}[!ht]
    \centering
    \includegraphics[width=.7\textwidth]{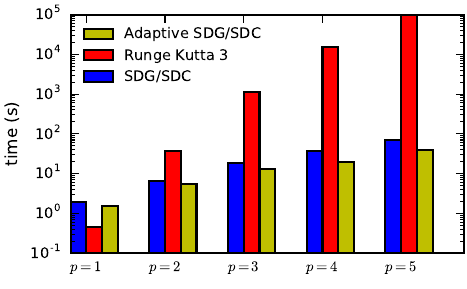}
    
    \begin{center}
    \caption{\label{fig:Time}
        Comparison of the computational time of using the third order Runge-Kutta method, the SDG/SDC method and the adaptive SDG/SDC method when the desired accuracy is achieved (spatial error dominate).  For the linear hyperbolic equation~\eqref{eq:linear}, with $160$ elements in space. 
}
     \end{center}
\end{figure}

\begin{table}\centering 
\begin{tabular}{@{}ccccccc@{}}\toprule
  time (s) && $\mathbb{P}^1$ & $\mathbb{P}^2$ & $\mathbb{P}^3$ & $\mathbb{P}^4$ & $\mathbb{P}^5$\\ 
\midrule
Runge-Kutta && 4.60E-01  & 3.73E+01 & 1.12E+03 & 1.51E+04 & 2.00E+05\\
SDG/SDC     && 1.93E+00  & 6.56E+00 & 1.83E+01 & 3.67E+01 & 6.94E+01\\
Ratio       && 0.24      &   5.7    &  61      & 411      &  2882   \\
\midrule
\bottomrule
\end{tabular}
\caption{The ratio of the computational time (s) of using the third order Runge-Kutta method, the SDG/SDC method and the adaptive SDG/SDC method when the desired accuracy is achieved (spatial error dominate).  For the linear hyperbolic equation~\eqref{eq:linear}, with $160$ elements in space.}
\label{table:ratio}
\end{table}

    

\subsection{Variable Coefficient Equation}
Then, we considered a variable coefficient equation 
\begin{equation}
  \label{eq:var}
  \begin{split}
    u_t + (au)_x & = 0, \quad (x,t) \in [0,1]\times [0, T],\\
       u(x,0) & =  \sin(2\pi x),
  \end{split}
\end{equation}
where the variable coefficient $a(x,t) = 2 + \sin(2\pi(x+t))$ and the right-side term $f(x,t)$ is chosen to make the exact solution be $u(x,t) = \sin(2\pi(x-t)))$. Equation~\eqref{eq:var} is also a linear hyperbolic equation, similar to the constant coefficient case~\eqref{eq:linear}, we compute the approximation with final time $T = 1$ over uniform meshes. In this example, due to its variable coefficient $a(x,t)$ the computation cost for each time step is significantly higher than the constant coefficient case. Therefore, time integrators that allows large time step size is more appreciated. Again, both the SDG ($ExSDG_p^{2p}$) and the SDC ($ExSDC_{p}^{2p}$) methods have been tested for time discretization with the CFL number $0.05$ for all polynomial degrees. Table~\ref{table:var} shows the $L^2$ error and the respective accuracy order for both the original DG solutions and the post-processed approximations. In Table~\ref{table:var}, we can see that both proposed time integrators work well, the $L^2$ errors are dominated by the spatial error of superconvergence order of $2p+1$ is observed for the post-processed approximations.        

\begin{table}\centering 
\begin{tabular}{@{}cccccccc@{}}\toprule
  & & & \multicolumn{2}{c}{DG error} & \phantom{abc}& \multicolumn{2}{c}{After post-processing} \\ 
\cmidrule{4-5} \cmidrule{7-8}
Degree & $N$ & & $L^2$ error & Order && $L^2$ error & Order \\ 
\midrule
$p = 2$ & 20  && 1.07E-04 &   --  && 1.90E-06 &   -- \\
        & 40  && 1.34E-05 &  3.00 && 2.86E-08 &  6.05\\
        & 80  && 1.67E-06 &  3.00 && 5.95E-10 &  5.59\\
        & 160 && 2.09E-07 &  3.00 && 1.88E-11 &  4.99\\
\midrule
$p = 3$ & 20  && 2.06E-06 &   --  && 6.87E-08 &   --\\ 
        & 40  && 1.29E-07 &  4.00 && 2.74E-10 &  7.97\\ 
        & 80  && 8.07E-09 &  4.00 && 1.07E-12 &  8.00\\ 
        & 160 && 5.04E-10 &  4.00 && 4.16E-15 &  8.01\\ 
\midrule
$p = 4$ & 20  && 3.22E-08 &   --  && 2.19E-09 &   -- \\
        & 40  && 1.01E-09 &  5.00 && 2.21E-12 &  9.95\\
        & 80  && 3.14E-11 &  5.00 && 2.19E-15 &  9.98\\
        & 160 && 9.82E-13 &  5.00 && 2.19E-18 &  9.97\\
\midrule
\bottomrule
\end{tabular}
\caption{$L^2$ errors for the DG approximation together with the approximation after post-processing for the variable coefficient equation~\eqref{eq:var}.}
\label{table:var}
\end{table}

    

\begin{figure}[!ht]
    \centering
    \begin{minipage}[c]{0.04\textwidth}
        \begin{sideways}
          \centerline{$\,$}
        \end{sideways}
    \end{minipage}
    \begin{minipage}[c]{0.46\textwidth}
      \centerline{$\qquad\quad \text{DG error}$}
    \end{minipage}
    \begin{minipage}[c]{0.46\textwidth}
      \centerline{$\qquad\quad \text{After post-processing}$}
    \end{minipage}
    \begin{minipage}[c]{0.04\textwidth}
        \begin{sideways}
          \centerline{$p = 3$}
        \end{sideways}
    \end{minipage}
    \begin{minipage}[c]{0.46\textwidth}
    \includegraphics[width=1.\textwidth]{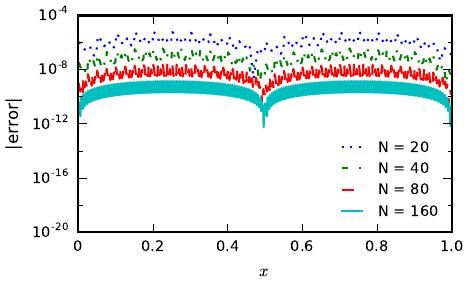}
    \end{minipage}
    \begin{minipage}[c]{0.46\textwidth}
    \includegraphics[width=1.\textwidth]{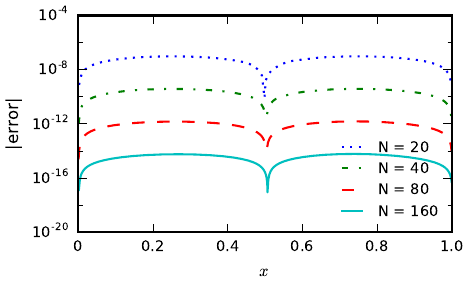}
    \end{minipage}
    
    \begin{minipage}[c]{0.04\textwidth}
        \begin{sideways}
          \centerline{$p = 4$}
        \end{sideways}
    \end{minipage}
    \begin{minipage}[c]{0.46\textwidth}
    \includegraphics[width=1.\textwidth]{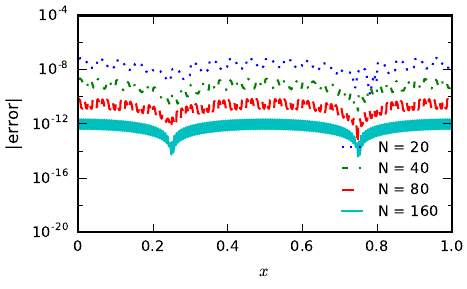}
    \end{minipage}
    \begin{minipage}[c]{0.46\textwidth}
    \includegraphics[width=1.\textwidth]{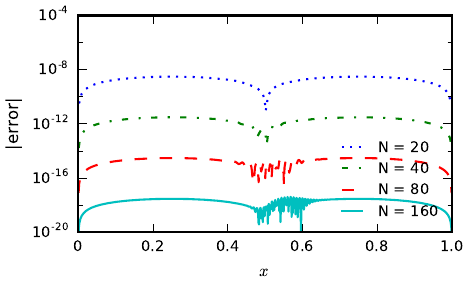}
    \end{minipage}

    \begin{center}
    \caption{\label{fig:var}
        Comparison of the point-wise errors of the DG solution and the post-processed solution for the variable coefficient equation~\eqref{eq:var}.
}
     \end{center}
\end{figure}

\subsection{Burgers Equation}
For the final example, we consider a nonlinear conservation law, Burgers equation, 
\begin{equation}
  \label{eq:burgers}
  \begin{split}
    u_t + \left(\frac{u^2}{2}\right)_x & = 0, \quad (x,t) \in [0,2\pi]\times [0, T],\\
       u(x,0) & =  \sin(x),
  \end{split}
\end{equation}
with final time $T = 0.5$ over uniform meshes. Note that the final time is chosen before the appearance of the shock to avoid interruptions. For dealing the shocks with the post-processing technique, one can refer~\cite{Li:2015} for details. The $L^2$ error and the respective accuracy order for both the original DG solutions and the post-processed approximations are presented in Table~\ref{table:burgers}. We note that the numerical results show the post-processed approximation has the superconvergence order of $2p+1$. However, the theoretical analysis towards nonlinear conservation laws is still an ongoing work, see~\cite{Meng:2016}. At the moment of writing, there is no theoretical proof to guarantee the superconvergence order of $2p+1$ for nonlinear conservation laws.  

\begin{table}\centering 
\begin{tabular}{@{}cccccccc@{}}\toprule
  & & & \multicolumn{2}{c}{DG error} & \phantom{abc}& \multicolumn{2}{c}{After post-processing} \\ 
\cmidrule{4-5} \cmidrule{7-8}
Degree & $N$ & & $L^2$ error & Order && $L^2$ error & Order \\ 
\midrule
$p = 2$ & 20  && 3.36E-04 &   --  && 9.22E-04 &   --\\ 
        & 40  && 4.79E-05 &  2.81 && 3.57E-05 &  4.69\\ 
        & 80  && 6.57E-06 &  2.87 && 7.87E-07 &  5.50\\ 
        & 160 && 8.83E-07 &  2.89 && 1.39E-08 &  5.82\\ 
\midrule
$p = 3$ & 20  && 3.99E-05 &   --  && 8.07E-04 &   --\\ 
        & 40  && 2.55E-06 &  3.97 && 2.06E-05 &  5.30\\ 
        & 80  && 1.75E-07 &  3.86 && 1.96E-07 &  6.72\\ 
        & 160 && 1.16E-08 &  3.91 && 1.04E-09 &  7.56\\ 
\midrule
$p = 4$ & 20  && 1.72E-06 &   --  && 7.63E-04 &   --\\
        & 40  && 1.28E-07 &  3.74 && 1.48E-05 &  5.69\\
        & 80  && 4.35E-09 &  4.88 && 7.31E-08 &  7.66\\
        & 160 && 1.46E-10 &  4.89 && 1.34E-10 &  9.09\\
\midrule
\bottomrule
\end{tabular}
\caption{$L^2$ errors for the DG approximation together with the approximation after post-processing for the Burgers equation~\eqref{eq:burgers}.}
\label{table:burgers}
\end{table}

    

\begin{figure}[!ht]
    \centering
    \begin{minipage}[c]{0.04\textwidth}
        \begin{sideways}
          \centerline{$\,$}
        \end{sideways}
    \end{minipage}
    \begin{minipage}[c]{0.46\textwidth}
      \centerline{$\qquad\quad \text{DG error}$}
    \end{minipage}
    \begin{minipage}[c]{0.46\textwidth}
      \centerline{$\qquad\quad \text{After post-processing}$}
    \end{minipage}
    \begin{minipage}[c]{0.04\textwidth}
        \begin{sideways}
          \centerline{$p = 3$}
        \end{sideways}
    \end{minipage}
    \begin{minipage}[c]{0.46\textwidth}
    \includegraphics[width=1.\textwidth]{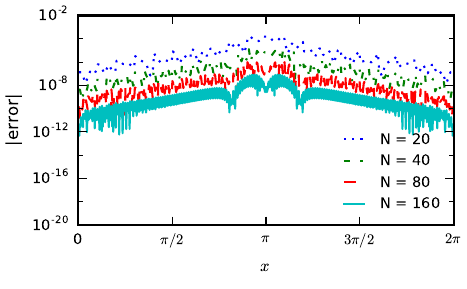}
    \end{minipage}
    \begin{minipage}[c]{0.46\textwidth}
    \includegraphics[width=1.\textwidth]{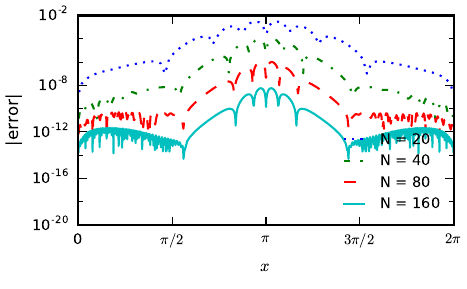}
    \end{minipage}
    
    \begin{minipage}[c]{0.04\textwidth}
        \begin{sideways}
          \centerline{$p = 4$}
        \end{sideways}
    \end{minipage}
    \begin{minipage}[c]{0.46\textwidth}
    \includegraphics[width=1.\textwidth]{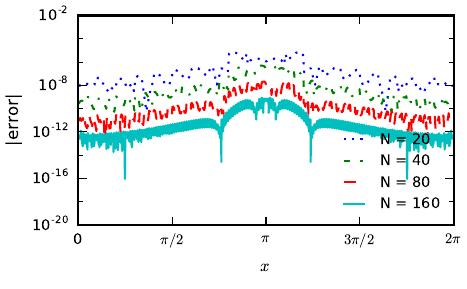}
    \end{minipage}
    \begin{minipage}[c]{0.46\textwidth}
    \includegraphics[width=1.\textwidth]{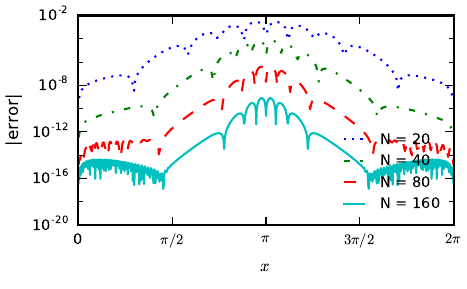}
    \end{minipage}

    \begin{center}
    \caption{\label{fig:burgers}
        Comparison of the point-wise errors of the DG solution and the post-processed solution for the Burgers equation~\eqref{eq:burgers}.
}
     \end{center}
\end{figure}

\section{Conclusion}
In this paper, we have studied two time discretizations for exploring the spatial superconvergence of DG methods (post-processing technique).  The focus is the SDG method, which is an iterative method derived from the DG method and demonstrates a superconvergence property in time.  As another choice, we also discussed the SDC method.  Numerical experiments are performed to verify that both methods are efficient for discretizing the semi-discretized system of DG methods with the post-processing technique.  Furthermore, the comparisons with the TVD explicit third Runge-Kutta methods are presented, and the enhancement of the efficiency is significant.  In particular, the SDG method is inherited from the DG method, it has a good application potential for discretizing in time for the DG method. In future work, we will explore different properties of the SDG method when discretizing the DG scheme in time, such as strong stability preserving property, the error estimates of the fully discretized system, parallel-in-time algorithms, etc.  

\section*{Acknowledgements}
Research supported by NSFC Grant 11801062, and NSFSC Grant 22NSFSC3834.

\bibliography{ref}
\bibliographystyle{plain}
\end{document}